\newtheorem{theorem}{Theorem}
\newtheorem{lema}{Lemma}
\newtheorem{slid}{Corollary}
\begin{document}
\thispagestyle{myheadings} \markboth{ On the exit from a finite
interval}{Theory of Stochastic Processes, Vol. 11 (27), no. 3-4,
2005, pp.71-81}
\bigskip
 {\noindent \Large\bf\sc    On the exit from a finite interval for the risk processes with
stochastic premiums}\footnotemark[1] \footnotetext{This is an
electronic reprint of the original article published in Theory of
Stochastic Processes, Vol. 11 (27), no. 3-4, 2005. This reprint
differs from the original in pagination and typographic detail.}

\bigskip
\bigskip
 { \bf D.V. Gusak,\footnote{Institute of Mathematics, Ukrainian National
 Academy of Science, 3 Tereshenkivska str.,
252601 Kyiv, Ukraine. \phantom{\quad \quad }
 \href{mailto:random@imath.kiev.ua}{random@imath.kiev.ua}
 } E.V.~Karnaukh
 \footnote{Department of Probability and Mathematical Statistics,
 Kyiv National
University, 64~Vladimirs\-kaya str., 252017 Kyiv, Ukraine.
\phantom{\quad \quad }
 \href{mailto:kveugene@mail.ru}{kveugene@mail.ru}
 }}\hskip 6 cm UDC 519.21
\begin{center}
\bigskip
\begin{quotation}
\noindent  {\small In this article the almost semi-continuous
step-process $\xi (t)$ is considered. The conditional characteristic
functions of the jumps of $\xi (t)$ have the form $\mathrm{E}\left [
e^{i\alpha \xi _k}/\xi_k>0\right ]=c(c-i\alpha )^{-1}$. For such
processes the boundary functionals connected with the exit from the
finite interval are investigated.}\footnotetext{{\emph{AMS 2000
subject classifications}}.  Primary 60 G 50; Secondary 60 K 10.}
\footnotetext{{\emph{ Key words and phrases: }} Almost
semi-continuous processes, Risk process with stochastic premiums,
functionals connected with the exit from interval.}
\end{quotation}
\end{center}
\bigskip

The problems on the exit from the finite interval for the process
$\xi (t)$ $(t\geq 0, \xi (0)=0)$ with stationary independent
increments were considered by many authors (see, for
example~\cite[ch.~IV, \S~2]{1}). In~\cite{1} the joint distributions
of extrema and the distributions of the values of the process up to
the exit from the interval were expressed in terms of rather
complicate series of the "convolutions" of
$$\Gamma ^{\pm }(s,x,y)=\mathrm{E}\,\left [  e^{-s\tau ^{\pm }(\pm x)},
\gamma ^{\pm }(\pm x)\leq y\right ],$$ where
\begin{gather*} \tau
^{\pm }(\pm x)=\inf\left\{t>0:\pm \xi (t)> x\right\},
\gamma ^{\pm }(\pm x)=\pm \xi (\tau ^{\pm }(\pm x))\mp x,\;x>0.
\end{gather*}
\par Simpler relations for the Wiener processes are established
in~\cite[p. 463]{1} and in~\cite[\S~27]{2}. In~\cite{3}~-~\cite{6},
the mentioned problems are investigated for the semi-continuous
processes $\xi (t)$ ($\xi (t)$ have jumps of one sign). For these
processes, in~\cite{7}~-~\cite{8} the density of distribution of
$\xi (t)$ up to the exit from the interval was represented in terms
of the resolvent functions $R_s(x)$ (introduced by V.S.~Korolyuk
in~\cite{3}).
\par We'll consider the compound Poisson process
$$\xi (t)=\sum_{k\leq \nu (t)}\xi _k,$$
where $\nu (t)$ is the Poisson process with the rate $\lambda>0$.
The distributions of $\xi _k$ satisfy the next condition ($F(x)$ is
the cumulative distribution function)
\begin{equation}
  \mathrm{P}\left\{\xi _k<x\right\}=qF(x)I\left\{x\leq
  0\right\}+(1-pe^{-cx})I\left\{x>0\right\},\;c>0,\,p+q=1.
\end{equation}
The process $\xi (t)$ is the almost upper semi-continuous piecewise
constant process. We can represent $\xi (t)$ as the claim surplus
process $\xi (t)=C(t)-S(t)$ with the stochastic premium function
$$
C(t)=\sum_{k\leq \nu _1(t)}\eta  _k,\;\eta  _k>0,
\;\mathrm{E}\,e^{i\alpha \eta  _k}= \frac{c}{c-i\alpha }, \;c>0,
$$
and with the process of claims $S(t)=\sum_{k\leq \nu_2 (t)}\xi'
_k,\; \xi' _k> 0 $. $\nu_1 (t),\;\nu _2(t)$ - are independent
Poisson processes with the rates $\lambda_1,\,\lambda _2 >0$,
$\lambda _1+\lambda _2=\lambda $ (for details see~\cite{8} ).
\par Note, that  $C(t)\rightarrow 0$ and
$\xi (t)\rightarrow -S(t)$ as $c\rightarrow \infty$, where $-S(t)$
is the non-increasing process.
\par Let $C_c(t)$ be the process with the cumulant
$$
\psi _c(\alpha )=\lambda _c\left(\frac{c}{c-i\alpha }-1 \right),
\quad \lambda _c=ac,\;a>0,
$$
then $\psi _c(\alpha )\underset{c\to\infty}{\longrightarrow}i\alpha
a$, consequently $C_c(t) \underset{c\to\infty}{\longrightarrow}at$,
and $ \xi _{c}(t)=C_c(t)-S(t)\rightarrow \xi ^{0}(t)=at-S(t)$, where
the limit process $\xi ^{0}(t)$ is the classical upper
semi-continuous risk process with the non-stochastic premium
function $C(t)=at$.
\par Let $\theta _s$ be the exponentially distributed random variable $(\hbox{P}\{\theta
_s>t\}=e^{-st};\;s,t>0)$. Then the randomly stopped process $\xi
(\theta _s)$ have the characteristic function (ch.f.)
$$
\varphi (s,\alpha )=\hbox{E}e^{i\alpha \xi (\theta _s)}=\frac{s}{s-\psi (\alpha )},
$$
where
\begin{equation}
  \psi (\alpha )=\lambda p(c(c-i\alpha )^{-1}-1)+\lambda q(\varphi (\alpha )-1),\; \varphi
(\alpha )=\int_{-\infty}^{0}e^{i\alpha x}dF(x).
\end{equation}
\par Let us denote the first exit time from the interval
$\left (  x-T,x\right )$, $0<x<T$, $T>0$:
$$
\tau (x,T)=\inf\left\{t>0: \xi (t)\notin (x-T,x)\right\},
$$
and the events
$$
A_+(x)=\left\{\omega : \xi (\tau (x,T))\geq
x\right\},\;A_-(x)=\left\{\omega : \xi (\tau (x,T))\leq x-T\right\}.
$$
Then
$$
  \tau (x,T)\dot{=}\begin{cases}
  \tau ^+(x,T)=\tau ^+(x),\;\omega \in A_+(x);\\
  \tau ^-(x,T)=\tau ^-(x-T),\;\omega \in A_-(x).
  \end{cases}
$$
 Overshoots at the moments of the exit from the interval we denote by
the following relations:
\begin{gather*} \gamma ^-_T(x)=x-T-\xi
(\tau^-(x,T)),\;
 \gamma ^+_T(x)=\xi (\tau ^+(x,T))-x.
\end{gather*}
 \par The main task of our paper is the finding the next moment generating functions (m.g.f.) of
the functionals connected with the exit from the interval.
\begin{gather*}
Q(T,s,x)=\mathrm{E}\,e^{-s \tau (x,T)} ,\\
Q^T(s,x)=\mathrm{E}\,\left [  e^{-s\tau ^+(x,T)},\,A_+(x)\right ],\\
Q_T(s,x)=\mathrm{E}\,\left [  e^{-s\tau ^-(x,T)},\,A_-(x)\right ],\\
V^{\pm}(s,\alpha ,x,T)=\mathrm{E}\,\left [  e^{i\alpha \gamma
^{\pm}_T(x)-s\tau
^{\pm}(x,T)},\,A_{\pm}(x)\right ],\\
V_{\pm}(s,\alpha ,x,T)=\mathrm{E}\,\left [  e^{i\alpha \xi (\tau
^{\pm }(x,T))-s\tau
^{\pm}(x,T)},\,A_{\pm}(x)\right ],\\
V(s,\alpha ,x,T)=\mathrm{E}\,\left [ e^{i\alpha \xi
(\theta_s)},\,\tau (x,T)>\theta_s\right ],
\end{gather*}
\par Let us denote the extrema $\displaystyle\xi ^{\pm}(t)=\sup_{0\leq s \leq t}(\inf)\xi (s)$,
 $\displaystyle\xi ^{\pm}=\sup_{0\leq s < \infty}(\inf)\xi(s),$
the joint distribution of $\left\{\xi (\theta_s),\xi^+
(\theta_s),\xi^- (\theta_s)\right\}$:
$$
  \begin{aligned}
  H_s(T,x,y)&=\mathrm{P}\left\{\xi (\theta_s)<y,\xi ^+(\theta _s)<x,\xi ^-(\theta_s)>x-T\right\}\\
  &=\mathrm{P}\left\{\xi (\theta_s)<y,\tau (x,T)>\theta_s\right\},
  \end{aligned}
$$
and
$$
P_{\pm }(s,x)=\mathrm{P}\left\{\xi ^{\pm
}(\theta_s)<x\right\},\;x\gtrless 0, \;p_{\pm
}(s)=\mathrm{P}\left\{\xi ^{\pm }(\theta_s)=0\right\},\;q_{\pm
}(s)=1-p_{\pm }(s);
$$
$$
\varphi _{\pm }(s,\alpha )=\pm \int^{\pm \infty}_{0}e^{i\alpha x}dP_{\pm }(s,x),
$$
$$
T^{\pm }(s,x)=\mathrm{E}\,\left [  e^{-s\tau ^{\pm }(x)},\tau ^{\pm
}(x)<\infty\right ],\;x\gtrless 0.
$$
\begin{lema} For the process $\xi(t)$  with cumulant~{\rm{(2)}} the main
factorization identity is represented by relations
\begin{equation}
  \varphi (s,\alpha )=\varphi_+ (s,\alpha )\varphi_- (s,\alpha ), \ \Im\alpha=0;
\end{equation}
\begin{equation}
   \varphi_+(s,\alpha
)=\dfrac{p_+(s)(c-i\alpha)}{\rho_+(s)-i\alpha},
\end{equation}
where $\rho_+(s)=cp_+(s)$ is the positive root of Lundberg's equation $\psi (-i r)=s$, $s>0$.
\begin{equation}
 \mathrm{P}\left\{\xi ^+(\theta_s)>x\right\}= T^+(s,x)=q_+(s)e^{-c\rho_+(s)x}, x>0.
\end{equation}
If $m>0:$
\begin{equation}
  \lim_{s\rightarrow 0}\rho _+(s)s^{-1}=\rho' _+(0)=m^{-1},\;
  \lim_{s\rightarrow 0}P_-(s,x)=\mathrm{P}\left\{\xi
  ^-<x\right\},\,x<0.
\end{equation}
If $m<0:$
\begin{equation}
\lim_{s\rightarrow 0}\rho _+(s)=\rho _+ >0;\;\lim_{s\rightarrow
0}s^{-1}\mathrm{P}\left\{\xi ^-(\theta_s)>x
\right\}=\mathrm{E}\,\tau ^-(x),\;x<0.
\end{equation}
\noindent If $\sigma _1^{2}=D\xi (1)<\infty$ and $m=\lambda
\left(pc^{-1}-q\widetilde{F}(0)\right)=0$
$\left(\widetilde{F}(0)=\int_{-\infty}^{0}F(x)dx \right)$, then
\begin{gather} \lim_{s\rightarrow 0}\rho
_+(s)s^{-1/2}=\frac{\sqrt{2}}{\sigma _1};\;\lim_{s\rightarrow
0}s^{-1/2}P'_-(s,x)=f_0(x),\;x<0,\nonumber
 \\
f_0(x)=k_0\frac{\partial}{\partial
x}\left(\int_{0}^{\infty}\mathrm{P}\left\{\widetilde{\xi
}_0(t)<x\right\}
  dt\right)=-k_0\frac{\partial}{\partial x}\mathrm{E}\,\tau _0(x),x<0;
\end{gather}
where $k_0=c\sigma _1\left(\sqrt{2}\right)^{-1}$, $\tau _0(x)=\inf\left\{t>0:\,\widetilde{\xi
}_0(t)<x\right\}$, $x<0$; $\widetilde{\xi }_0(t)$ is the decreasing process with the spectral
measure
$$\Pi
_0(dx)=\lambda q\left(cF(x)dx +dF(x)\right),\;x<0.$$
\end{lema}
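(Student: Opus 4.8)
The plan is to treat the two asserted limits separately, obtaining the density limit from the $\rho_+$-asymptotics through the Wiener--Hopf factorization~(3)--(4).

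For the first limit I would start from Lundberg's equation $\psi(-i\rho_+(s))=s$ defining $\rho_+(s)$. Writing $\kappa(r)=\psi(-ir)$ for the Laplace exponent, the hypothesis $\sigma_1^2<\infty$ gives the expansion $\kappa(r)=mr+\tfrac12\sigma_1^2r^2+o(r^2)$ as $r\to0$. Since $\rho_+(s)$ is the root tending to the origin as $s\to0$ and $m=0$, the equation $\kappa(\rho_+(s))=s$ becomes $\tfrac12\sigma_1^2\rho_+(s)^2+o(\rho_+(s)^2)=s$, whence $\rho_+(s)\sim\sqrt{2s}/\sigma_1$ and $\rho_+(s)s^{-1/2}\to\sqrt2/\sigma_1$. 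In particular $p_+(s)=\rho_+(s)/c\sim(\sqrt2/(c\sigma_1))\sqrt s$, so that $s/p_+(s)\sim k_0\sqrt s$ with $k_0=c\sigma_1/\sqrt2$; this normalizing constant reappears below.

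For the second limit I would solve~(3) for the lower factor, $\varphi_-(s,\alpha)=\varphi(s,\alpha)/\varphi_+(s,\alpha)$, and insert $\varphi(s,\alpha)=s/(s-\psi(\alpha))$ together with~(4):
$$\varphi_-(s,\alpha)=\frac{s}{s-\psi(\alpha)}\cdot\frac{\rho_+(s)-i\alpha}{p_+(s)(c-i\alpha)}.$$
Passing to the limit $s\to0$ factor by factor (using $\rho_+(s)\to0$, $s-\psi(\alpha)\to-\psi(\alpha)$ and $s/p_+(s)\sim k_0\sqrt s$) gives the pointwise limit
$$s^{-1/2}\varphi_-(s,\alpha)\longrightarrow k_0\,\frac{i\alpha}{(c-i\alpha)\psi(\alpha)}.$$
The decisive algebraic step is to recognise the right-hand side as $-k_0/\psi_0(\alpha)$, where $\psi_0(\alpha)=\int_{-\infty}^0(e^{i\alpha x}-1)\Pi_0(dx)$ is the cumulant of $\widetilde\xi_0$. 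Expanding $\psi_0$ over the two pieces of $\Pi_0(dx)=\lambda q(cF(x)dx+dF(x))$ and integrating by parts (so $\int_{-\infty}^0e^{i\alpha x}F(x)dx=(1-\varphi(\alpha))/(i\alpha)$ and $\int_{-\infty}^0F(x)dx=\widetilde F(0)$), I get $-(c-i\alpha)\psi(\alpha)/(i\alpha)=-\lambda p+\lambda qc(1-\varphi(\alpha))/(i\alpha)+\lambda q(\varphi(\alpha)-1)$, which coincides with $\psi_0(\alpha)$ exactly because the constant terms match under $\lambda p=\lambda qc\widetilde F(0)$, i.e.\ precisely $m=\lambda(pc^{-1}-q\widetilde F(0))=0$. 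Thus the limit transform is $-k_0/\psi_0(\alpha)$, which is $k_0$ times the Fourier transform of the potential density $u_0$ of $\widetilde\xi_0$, since $\int_0^\infty\mathrm{E}\,e^{i\alpha\widetilde\xi_0(t)}dt=-1/\psi_0(\alpha)$. Identifying $u_0(x)=\frac{\partial}{\partial x}\int_0^\infty\mathrm{P}\{\widetilde\xi_0(t)<x\}dt$ and, via the inverse (duality) relation between the decreasing $\widetilde\xi_0$ and its first-passage $\tau_0(x)$, $u_0(x)=-\frac{\partial}{\partial x}\mathrm{E}\,\tau_0(x)$, yields both stated forms of $f_0$.

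Finally I would pass from convergence of transforms to convergence of densities: the atom at the origin does not contribute for $x<0$, and Fourier inversion gives $s^{-1/2}P'_-(s,x)=\frac{1}{2\pi}\int_{-\infty}^\infty e^{-i\alpha x}s^{-1/2}\varphi_-(s,\alpha)\,d\alpha$, the claim being that the limit may be taken inside. I expect this interchange to be the main obstacle. The difficulty is twofold: near $\alpha=0$ the limit transform has a simple pole $\propto1/\alpha$ (reflecting that $f_0$ does not vanish as $x\to-\infty$, so $f_0\notin L^1$ and the inversion is only principal-value convergent), while the tail in $\alpha$ must be controlled uniformly in $s$. I would handle this by exploiting the explicit rational dependence on $(c-i\alpha)$ and the quadratic decay $\psi(\alpha)\sim-\tfrac12\sigma_1^2\alpha^2$ near the origin to build an $s$-uniform integrable majorant away from $\alpha=0$, and treat the neighbourhood of the origin by the standard Tauberian/continuity argument in the Korolyuk--Gusak framework, legitimising the termwise limit and giving $s^{-1/2}P'_-(s,x)\to f_0(x)$ for each $x<0$.
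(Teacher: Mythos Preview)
Your argument follows essentially the same route as the paper: divide $\varphi(s,\alpha)$ by the explicit $\varphi_+$ from~(4), send $s\to0$ using $\rho_+(s)\sim\sqrt{2s}/\sigma_1$ and $s/p_+(s)\to k_0$, and identify the limit of $s^{-1/2}\varphi_-(s,\alpha)$ with $-k_0/\widetilde\psi_0(\alpha)$ via the algebraic reduction that uses $m=0$, then read off $f_0$ as $k_0$ times the potential density of $\widetilde\xi_0$. The only differences are that you spell out the Lundberg expansion for $\rho_+(s)$ and worry explicitly about legitimising the passage from transform convergence to density convergence, whereas the paper simply asserts this last step; your caution there is warranted but does not change the strategy.
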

\begin{proof} Relations~\rm{(3)}~-~\rm{(7)} were proved
in~\cite{7}~-~\cite{8}. If $m=0$ $\left(p=cq\widetilde{F}(0)
\right)$, then
$$
\varphi (s,\alpha )=\frac{s(c-i\alpha )}{s(c-i\alpha )
-i\alpha \lambda (p-q\widetilde{F}(\alpha )(c-i\alpha ))},\;
\widetilde{F}(\alpha )=\int_{-\infty}^{0}e^{i\alpha x}F(x)dx.
$$
On the basis of factorization identity~\rm{(3)} as $s\rightarrow 0$,
we get
\begin{gather*} \frac{1}{\sqrt{s}}\varphi _-(s,\alpha
)=\frac{\sqrt{s}}{p_+(s)}\frac{\rho _+(s)-i\alpha }{s(c-i\alpha
)-i\alpha \lambda \left(p-q\widetilde{F}(\alpha )(c-i\alpha )
\right)}\rightarrow \widetilde{f}_0(\alpha
),\\
\widetilde{f}_0(\alpha )=\frac{c \sigma _1}{\sqrt{2}}\frac{1}{-\lambda q\left [
\left(\widetilde{F}(\alpha )-\widetilde{F}(0) \right)c+\varphi (\alpha ) -1\right ]}=\frac{c \sigma
_1}{\sqrt{2}}\frac{1}{-\widetilde{\psi }_0(\alpha )},\\
\widetilde{\psi }_0(\alpha )=\int_{-\infty}^{0}\left(e^{i\alpha x}-1 \right)\Pi _0(dx), \;\Pi
_0(dx)=\lambda q\left(cF(x)dx+dF(x) \right),\;x<0.
\end{gather*}
Let's denote
$$
\varphi _0(s,\alpha )=\mathrm{E}\,e^{i\alpha \widetilde{\xi}
_0(\theta_s)}= \frac{s}{s-\widetilde{\psi} _0(\alpha )},
$$
where $\widetilde{\xi} _0(t)$ is the decreasing process with the cumulant $\widetilde{\psi}
_0(\alpha )$. Since
$$
\frac{c \sigma _1}{\sqrt{2}}\varphi _0(s,\alpha )s^{-1}\rightarrow \widetilde{f}_0(\alpha )
=\int_{-\infty}^{0}e^{i\alpha x}f_0(x)dx,\; s\rightarrow 0,
$$
we get that
$$
f_0(x)=k_0\frac{\partial}{\partial x}\left(\int_{0}^{\infty}
\mathrm{P}\left\{\widetilde{\xi} _0(t)<x\right\}dt\right),
$$
or
$$
-f_0(x)=k_0\frac{\partial}{\partial
x}\int_{0}^{\infty}\mathrm{P}\left\{\tau _0(x)>t\right\}dt
=k_0\frac{\partial}{\partial x}\mathrm{E}\,\tau _0(x),\;x<0.
$$
\end{proof}
Let's introduce the set of boundary functions on the interval $I\subset (-\infty,\infty)$:
$$\mathfrak{L}(I)=\left\{G(x):\int_{I}|G(x)|dx<\infty\right\},$$
 and the set of integral transforms:
$$\mathfrak{R}^0(I)=\left\{g^0(\alpha ):g^0(\alpha )=C+\int_{I}e^{i\alpha x}G(x)dx\right\}.$$
Let's denote the projection operations on
$\mathfrak{R}^0((-\infty,\infty))$ by the next relations
$$\gathered
\left [  g^0(\alpha )\right ]_{I}=\int_{I}e^{i\alpha x}G(x)dx,\;\;
\left [  g^0(\alpha )\right ]^0_{I}=C+\int_{I}e^{i\alpha x}G(x)dx,\\
\left [  g^0(\alpha )\right ]_{-}=\left [  g^0(\alpha )\right
]_{(-\infty,0)},\;\left [  g^0(\alpha )\right ]_{+}=\left [
g^0(\alpha )\right ]_{(0,\infty)}.
\endgathered$$
\par The main results of our paper are included in the next two assertions.
\begin{theorem} For the process $\xi (t)$ with cumulant~{\rm{(2)}} $Q^T(s,x)$
has the next form $(0<x<T)$
\begin{multline}
  Q^T(s,x)=q_+(s)e^{-\rho _+(s)x}\int_{x-T}^{0}e^{\rho _+(s)y}dP_-(s,y)\times\\
  \times\left [  e^{-\rho _+(s)T}\int_{-\infty}^{-T}e^{c\left(T+y \right)}dP_-(s,y)
  +\int_{-T}^{0}e^{\rho _+(s)y}dP_-(s,y)\right
  ]^{-1}.
\end{multline}
\end{theorem}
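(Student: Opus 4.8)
The plan is to derive a closed equation for the unknown scalar $Q^{T}(s,x)$ from a single Wiener--Hopf type identity, and then read off the answer. First I would apply the strong Markov property at the exit instant $\tau(x,T)$ together with the lack of memory of the killing time $\theta_{s}$. Splitting the expectation of $e^{i\alpha\xi(\theta_{s})}$ according to the three possibilities ``not yet left $(x-T,x)$'', ``left upward'' and ``left downward'', and using that after an exit the increment over the remaining (again exponential) time is an independent copy of $\xi(\theta_{s})$, one obtains the master identity
$$\varphi(s,\alpha)=V(s,\alpha,x,T)+\bigl[V_{+}(s,\alpha,x,T)+V_{-}(s,\alpha,x,T)\bigr]\varphi(s,\alpha),$$
in which $V(s,\alpha,x,T)$ is an integral transform carried by $(x-T,x)$, $V_{+}$ by $[x,\infty)$ and $V_{-}$ by $(-\infty,x-T]$.

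The decisive simplification comes from the almost upper semi-continuity. Since the positive jumps are exponential with parameter $c$, the lack of memory makes the overshoot $\gamma^{+}_{T}(x)$ exponentially distributed with parameter $c$ and independent of $\tau^{+}(x,T)$ on $A_{+}(x)$; hence
$$V_{+}(s,\alpha,x,T)=\frac{c}{c-i\alpha}\,e^{i\alpha x}\,Q^{T}(s,x),$$
so the whole upper boundary carries only the single unknown $Q^{T}(s,x)$. Substituting this together with the factorization $\varphi=\varphi_{+}\varphi_{-}$, $\varphi_{+}=\dfrac{p_{+}(s)(c-i\alpha)}{\rho_{+}(s)-i\alpha}$, and using $\rho_{+}(s)=cp_{+}(s)$, the product $V_{+}\varphi$ collapses neatly, and the master identity becomes
$$\varphi_{+}\varphi_{-}\bigl(1-V_{-}\bigr)-Q^{T}(s,x)\,e^{i\alpha x}\,\frac{\rho_{+}(s)}{\rho_{+}(s)-i\alpha}\,\varphi_{-}=V,$$
whose right-hand side is supported on $(x-T,x)$.

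Next I would divide by $\varphi_{-}(s,\alpha)$ (analytic and zero-free in the appropriate half-plane) and apply the projection operators $[\cdot]_{-}$, $[\cdot]_{(x-T,x)}$ and $[\cdot]_{(x,\infty)}$ to split the identity into pieces living on the three regions. Under inversion the kernel $\varphi_{-}$ produces $dP_{-}(s,y)$, the factor $\rho_{+}/(\rho_{+}-i\alpha)$ produces the exponential $e^{-\rho_{+}(s)u}$, and the factor $c-i\alpha$ from $\varphi_{+}$ produces $e^{-cu}$. The projection onto $(-\infty,x-T)$ yields the equation that determines the downward transform $V_{-}$ through $P_{-}$, while the analyticity requirement at the point $i\alpha=\rho_{+}(s)$ (equivalently, the projection onto $(x,\infty)$ together with the fact that $V$ is entire) furnishes one scalar relation for $Q^{T}(s,x)$. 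Its numerator is $q_{+}(s)e^{-\rho_{+}(s)x}\int_{x-T}^{0}e^{\rho_{+}(s)y}dP_{-}(s,y)$ and its denominator is the bracketed normalization; solving gives formula~(8).

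I expect the main obstacle to be solving this coupled system and, in particular, assembling the denominator term $e^{-\rho_{+}(s)T}\int_{-\infty}^{-T}e^{c(T+y)}dP_{-}(s,y)$. It arises from the deep down-crossings $\xi(\tau^{-})<x-T$, i.e. $y<-T$, where the support of $V_{-}$ meets the region retained by the projection; there the kernel $e^{-cu}$ coming from the factor $c-i\alpha$ of $\varphi_{+}$ survives and produces the weight $e^{c(T+y)}$, which is exactly the chance that a single exponential$(c)$ up-jump clears the depth $-(T+y)$ below the lower level. The delicate bookkeeping is to track this overlap correctly; two useful checks are that the two denominator kernels agree at $y=-T$ (both equal $e^{-\rho_{+}(s)T}$), and that as $T\to\infty$ the first denominator term vanishes and~(8) reduces to $T^{+}(s,x)=q_{+}(s)e^{-\rho_{+}(s)x}$.
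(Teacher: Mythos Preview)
Your approach is correct but genuinely different from the paper's. The paper conditions on the \emph{first jump} of the process: from the stochastic relation for $\tau^{+}(x,T)$ at the first jump time $\zeta$ it writes down the renewal-type integral equation
\[(s+\lambda)Q^{T}(s,x)=\lambda p\,e^{-cx}+\lambda\int_{x-T}^{x}Q^{T}(s,x-z)\,dF_{1}(z),\]
passes to $\overline{Q}^{T}=1-Q^{T}$, extends it to all $x>0$ (which introduces a single unknown constant $\overline{C}_{T}(s)$), regularizes, and solves the resulting half-line convolution equation by Fourier transform, factorization~(3)--(4) and the projection $[\,\cdot\,]_{+}$. Inverting gives an explicit expression containing $\overline{C}_{T}(s)$, and matching against the boundary data fixes $\overline{C}_{T}(s)$ and produces~(9). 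Your route conditions instead at the \emph{exit time} $\tau(x,T)$, sets up the Pecherskii master identity, collapses $V_{+}$ via the exponential overshoot, and then reads off $Q^{T}$ from the pole of $\varphi_{+}$ at $i\alpha=\rho_{+}(s)$. This is legitimate --- indeed it is essentially the device the paper itself invokes (through the citation to Pecherskii~[9]) when it states formula~(11) in Theorem~2. What the paper's first-jump method buys is that from the start one has a scalar equation for $Q^{T}$ alone, with one undetermined constant; your method carries the whole downward transform $V_{-}$ as a second unknown, and the residue condition at $\rho_{+}(s)$ gives $Q^{T}=q_{+}e^{-\rho_{+}x}\bigl(1-V_{-}(s,-i\rho_{+},x,T)\bigr)$, which is closed only after $V_{-}$ has been expressed through $Q^{T}$ via the projection onto $(-\infty,x-T)$. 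You correctly anticipate this coupling as the chief difficulty, and your heuristic for how the weight $e^{c(T+y)}$ arises in the denominator is right. In exchange, your method delivers $V$, $V_{+}$ and the structure of $V_{-}$ in one pass, so it is the more informative of the two routes.
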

\begin{theorem} For the process $\xi (t)$ with cumulant~{\rm{(2)}}
the joint distributions of
\par \noindent $\left \{\tau ^+(x,T),\gamma ^+_T(x)\right\}$ and $\left \{\tau ^+(x,T),\xi(\tau
^+(x,T))\right\}$ are determined by the next relations
\begin{equation}
  \begin{cases}
    V^+(s,\alpha ,x,T)=\displaystyle\frac{c}{c-i\alpha }Q^T(s,x),\;0<x<T, \\
     \displaystyle V_+(s,\alpha ,x,T)=e^{i\alpha x}V^+(s,\alpha ,x,T)
      =\frac{c\; e^{i\alpha x}}{c-i\alpha
      }Q^T(s,x).
  \end{cases}
\end{equation}
The ch.f. of $\xi (\theta_s)$ before the exit time from the interval
has the form
\begin{equation}
\begin{split}
  V(s,\alpha ,x,T)&=\varphi _+(s,\alpha )\left [  \varphi _-(s,\alpha )
  \left(1-V_+(s,\alpha, x,T) \right)\right ]_{ [  x-T,\infty)}\\
  &=\varphi _+(s,\alpha )\left [  \varphi _-(s,\alpha )
  \left(1-c\; e^{i\alpha x}(c-i\alpha )^{-1}Q^T(s,x) \right)\right ]_{ [  x-T,\infty)},
\end{split}
\end{equation}
the corresponding distribution has the next density
$(x-T<z<x,\;z\neq 0)$
\begin{multline}
  h_s(T,x,z)=\frac{\partial}{\partial z}H_s(T,x,z)=\\
 \shoveleft{= \left( p_+(s)P'_-(s,z)
 -q_+(s)\rho _+(s)\int_{z}^{0}e^{\rho _+(s)(y-z)}dP_-(s,y)\right)I\left\{z<0\right\}+}\\
+\rho _+(s)Q^T(s,x)\int_{z-x}^{0}e^{\rho _+(s)(y-(z-x))}dP_-(s,y),
\end{multline}
and the next atomic probability
$$\mathrm{P}\left\{\xi (\theta_s)=0,\tau (x,T)>\theta_s\right\}=
\mathrm{P}\left\{\xi
(\theta_s)=0\right\}=p_-(s)p_+(s)=\frac{s}{s+\lambda }.$$
\end{theorem}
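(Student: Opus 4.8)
\noindent\emph{Proposed approach.} The plan is to treat the four assertions in turn: (10) follows from the lack-of-memory property of the exponential upward jumps, (11) from a renewal equation combined with a Wiener--Hopf projection, (12) by inverting (11), and the atom by an elementary no-jump argument. For (10), observe that $\xi(t)$ crosses the upper level $x$ only by an upward jump, and these jumps have conditional ch.f. $c(c-i\alpha)^{-1}$, i.e. are $\mathrm{Exp}(c)$-distributed. By the lack-of-memory property and the strong Markov property at $\tau^+(x,T)$, on $A_+(x)$ the overshoot $\gamma^+_T(x)$ is then $\mathrm{Exp}(c)$-distributed and independent of $\left(\tau^+(x,T),A_+(x)\right)$, so that $V^+(s,\alpha,x,T)=\mathrm{E}\,e^{i\alpha\gamma^+_T(x)}\cdot\mathrm{E}\left[e^{-s\tau^+(x,T)},A_+(x)\right]=\frac{c}{c-i\alpha}Q^T(s,x)$. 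The second line of (10) is then immediate from $\xi(\tau^+(x,T))=x+\gamma^+_T(x)$ on $A_+(x)$.

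For (11) I would set up a renewal relation by splitting on $\left\{\tau(x,T)>\theta_s\right\}$ and its complement. On $\left\{\tau(x,T)\le\theta_s\right\}$ the strong Markov property at $\tau(x,T)$ and the memorylessness of $\theta_s$ (the residual time $\theta_s-\tau(x,T)$ is again $\mathrm{Exp}(s)$, and the post-$\tau$ increment is an independent copy of $\xi(\theta_s)$) yield $\varphi(s,\alpha)=V(s,\alpha,x,T)+\left(V_+(s,\alpha,x,T)+V_-(s,\alpha,x,T)\right)\varphi(s,\alpha)$, that is $V=\varphi\left(1-V_+-V_-\right)$, with $V_-$ the still unknown lower-exit transform. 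Inserting the factorization $\varphi=\varphi_+\varphi_-$ of Lemma~1, this becomes $V=\varphi_+\bigl[\varphi_-(1-V_+)-\varphi_-V_-\bigr]$.

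The decisive step, which I expect to be the main obstacle, is to read the bracket as a Wiener--Hopf splitting at the level $x-T$. Writing $g:=\varphi_-(1-V_+)$, I would show that $\varphi_+^{-1}V$ is an integral transform carried by $[x-T,\infty)$ while $\varphi_-V_-$ is carried by $(-\infty,x-T]$, so that the two summands are exactly the complementary projections of $g$: namely $\varphi_+^{-1}V=\bigl[g\bigr]_{[x-T,\infty)}$ and $\varphi_-V_-=\bigl[g\bigr]_{(-\infty,x-T)}$. The support of $\varphi_-V_-$ follows from $\xi(\tau^-)\le x-T$ on $A_-(x)$ and the negative support of $\varphi_-$; the support of $\varphi_+^{-1}V$ uses the rational form $\varphi_+^{-1}=(\rho_+-i\alpha)\bigl(p_+(c-i\alpha)\bigr)^{-1}$, which corresponds to a signed measure carried by $[0,\infty)$, whence convolution with it cannot move the support of $V$ (carried by $(x-T,x)$) below $x-T$. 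Reconciling these support statements with the atom of $\xi^{\pm}(\theta_s)$ at $0$ and the constant term of the class $\mathfrak{R}^0$, and arguing uniqueness of the splitting, is the delicate point. Granting it, $V=\varphi_+\bigl[g\bigr]_{[x-T,\infty)}$ is exactly (11), and substituting $V_+=c\,e^{i\alpha x}(c-i\alpha)^{-1}Q^T(s,x)$ gives its expanded form.

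Finally, the density (12) I would obtain by Fourier inversion of (11), computing the projection from the explicit forms of $\varphi_{\pm}$ and the identity $\rho_+(s)=cp_+(s)$ of Lemma~1; the cancellation $\varphi_+\cdot c(c-i\alpha)^{-1}=\rho_+(\rho_+-i\alpha)^{-1}$ together with the shift $e^{i\alpha x}$ generate the two terms, the $I\{z<0\}$ term from the $\varphi_-$ part of $g$ and the $Q^T$-dependent term from the $\varphi_-V_+$ part. The atomic probability is elementary: up to a null set $\left\{\xi(\theta_s)=0\right\}$ is the event that no jump occurs before $\theta_s$, of probability $\int_0^\infty e^{-\lambda t}se^{-st}\,dt=\frac{s}{s+\lambda}$, and on this event $\xi\equiv0$ stays in $(x-T,x)$, so $\tau(x,T)>\theta_s$; letting $|\alpha|\to\infty$ in the factorization identity then identifies $\frac{s}{s+\lambda}=p_-(s)p_+(s)$.
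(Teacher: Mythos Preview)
Your proposal is correct and in fact more self-contained than the paper's own proof. For (10) the paper does not argue via memorylessness; instead it observes that the integral equation (13) for $V^+$, derived in the proof of Theorem~1 by conditioning on the first jump, is exactly $c(c-i\alpha)^{-1}$ times equation (14) for $Q^T$, whence the two functions agree by uniqueness of solutions --- your overshoot argument is the conceptual explanation for why this linear relation holds, and is shorter. For (11) the paper simply cites Pecherskiy~[9] for the projection identity, while you supply the renewal-plus-Wiener--Hopf-splitting argument yourself; your identification of the support and atom bookkeeping as the delicate point is accurate, and the rational form of $\varphi_+^{-1}$ indeed confines its inverse transform to $[0,\infty)$ so that $\varphi_+^{-1}V$ lives on $[x-T,\infty)$. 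For (12) the paper also inverts (11), but routes through an intermediate representation (its equation (21)) involving an auxiliary exponential variable $\theta'_c$ before simplifying to the stated density; your direct use of the cancellation $\varphi_+\cdot c(c-i\alpha)^{-1}=\rho_+(\rho_+-i\alpha)^{-1}$ bypasses that step. The atom computation is not spelled out in the paper at all.
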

\begin{proof}
First, let us prove Theorem 1. From the stochastic relations for
$\tau ^+(x,T)$, $\gamma ^+_T(x)$ ($\xi =\xi _1$ have the cumulative
distribution function $F_1(x)$, $\zeta $ - the moment of the first
jump of $\xi (t)$):
\begin{gather*}
  \tau ^+(x,T)\dot{=}
  \begin{cases}
    \zeta ,\; \xi >x, \\
    \zeta +\tau ^+(x-\xi ,T),\;x-T<\xi <x,
  \end{cases}\\
\gamma  ^+_T(x)\dot{=}
  \begin{cases}
    \xi -x ,\; \xi >x, \\
    \gamma ^+_T(x-\xi ),\;x-T<\xi <x,
  \end{cases}
\end{gather*}
we have the next equation for $V^+(s,\alpha ,x)=V^+(s,\alpha ,x,T)$
\begin{equation}
  (s+\lambda )V^+(s,\alpha ,x)=\frac{\lambda p c}{c-i\alpha }e^{-c x}+\lambda \int_{x-T}^x
V^+(s,\alpha ,x-z)dF_1(z),\;0<x<T.
\end{equation}
If $\alpha =0$, then from~\rm{(13)} we obtain the equation for $Q^T(s,x)$
\begin{equation}
  (s+\lambda )Q^T(s,x)=\lambda p e^{-c x}+\lambda \int_{x-T}^x Q^T(s,x-z)dF_1(z),\;0<x<T.
\end{equation}
Since $\mathrm{P}\left(A_+(x) \right)=1$ for $x<0$, then we have the
next boundary conditions
$$
Q^T(s,x)=
  \begin{cases}
    0,\; x>T, \\
    1,\; x<0.
  \end{cases}
$$
After the replacement
$$\overline{Q}\phantom{|}^T(s,x)=1-Q^T(s,x),$$
relation~\rm{(14)} yields the equation for
$\overline{Q}\phantom{|}^T(s,x)$ $\left(0<x<T \right)$
$$
  (s+\lambda )\overline{Q}\phantom{|}^T(s,x)=s+\lambda F(x-T)+\lambda
  \int_{0}^{T}\overline{Q}\phantom{|}^T(s,z)F'_1(x-z)dz,
$$
which after prolonging for $x>0$ has the form:
\begin{equation}
  (s+\lambda )\overline{Q}\phantom{|}^T(s,x)=
  sC(x)+\lambda \int_{-\infty}^{\infty}\overline{Q}\phantom{|}^T(s,z)F'_1(x-z)dz
  +C_T^>(s,x),
\end{equation}
$$
C(x)=I\left\{x>0\right\},\;C_T^>(s,x)=\overline{C}_T(s)e^{-c x},\;x>0,$$
\begin{equation}
\overline{C}_T(s)=\lambda p\left [
e^{cT}-c\overline{Q}\phantom{|}^{*}_s(T)\right
],\;\overline{Q}\phantom{|}^{*}_s(T)=\int_{0}^{T}e^{c
x}\overline{Q}\phantom{|}^T(s,x)dx.
\end{equation}
Let's introduce the function $C_\epsilon (x)=e^{-\epsilon x}C(x)$,
$x>0$, and consider instead of~\rm{(15)} the equation for
$Y_\epsilon (T,s,x)$ ($\epsilon >0$):
\begin{equation}
  (s+\lambda )Y_\epsilon (T,s,x)=sC_\epsilon (x)+\lambda \int_{-\infty}^{\infty}Y_\epsilon
  (T,s,x-z)dF_1(z)+C^>_T(s,x),\;x>0.
\end{equation}
\par Denote
\begin{gather*}
  y_\epsilon (T,s,\alpha )=\int_{0}^{\infty}e^{i\alpha x}Y_\epsilon (T,s,x)dx,\;
  \widetilde{C}_{\epsilon }(\alpha )=\int_{0}^{\infty}e^{i\alpha x}C_{\epsilon }(x)dx,
   \\
  \widetilde{C}_T(s,\alpha)=\int_{0}^{\infty}e^{i\alpha x}C^>_T(s,x)dx.
\end{gather*}
After integral transform from~\rm{(17)} we obtain the next equation
$$
(s-\psi (\alpha ))y_{\epsilon }(T,s,\alpha )=s\widetilde{C}_\epsilon (\alpha
)+\widetilde{C}_T(s,\alpha )-\left [ y_\epsilon (\alpha )\varphi
(\alpha )\right ]_-
$$
or
\begin{equation}
s y_\epsilon (T,s,\alpha )\varphi ^{-1}(s,\alpha )=s\widetilde{C}_\epsilon (\alpha
)+\widetilde{C}_T(s,\alpha )-\left [  y_\epsilon (\alpha )\varphi (\alpha )\right ]_-.
\end{equation}
After using the factorization decomposition~\rm{(3)} and the
projection operation $\left [ \,\right]_{+}$, relation~\rm{(18)}
yields
$$
s y_\epsilon (T,s,\alpha )\varphi ^{-1}_+(s,\alpha )= \left [\varphi
_-(s,\alpha )\left(s\widetilde{C}_\epsilon (\alpha
)+\widetilde{C}_T(s,\alpha ) \right)  \right ]_+
$$
or
\begin{equation}
  sy_\epsilon (T,s,\alpha )=\varphi _+(s,\alpha )\left [\varphi _-(s,\alpha )
  \left(s\widetilde{C}_\epsilon (\alpha )+\widetilde{C}_T(s,\alpha ) \right)  \right
  ]_{+}.
\end{equation}
By inverting of~\rm{(19)}, we obtain
\begin{equation}
  sY_\epsilon (T,s,x)=s\int_{0}^{x}B_\epsilon (x-y)dP_+(s,y)
  +\int_{0}^{x}B(s,x-y,T)dP_+(s,y),
\end{equation}
\begin{gather*}
 \begin{aligned} B_\epsilon
(x)&=\int_{-\infty}^{x}e^{-\epsilon
(x-y)}dP_-(s,y)=\int_{-\infty}^{0}e^{-\epsilon
(x-y)}dP_-(s,y)=e^{-\epsilon x}\mathrm{E}\,e^{\epsilon \xi^-
(\theta_s)},
\end{aligned}\\
 B(s,x,T)=\overline{C}_T(s)\int_{-\infty}^{x-T}e^{-c(x-y)}dP_-(s,y),\;x>0.
\end{gather*}
 Taking into account that $C_\epsilon (x)\rightarrow I\left\{x>0\right\}$ as $\epsilon
\rightarrow 0$, then $Y_\epsilon (T,s,x)\rightarrow
\overline{Q}\phantom{|}^T(s,x)$ as $\epsilon \rightarrow 0$,
$0<x<T$. So Eq.~\rm{(20)} yields
$$
  s\overline{Q}\phantom{|}^T(s,x)=sP_+(s,x)+p_+(s)B(s,x,T)
  +\int_{+0}^{x}B(s,x-z,T)P'_+(s,z)dz.
$$
Taking into account that
\begin{multline*} q_+(s)\rho
_+(s)\int_{0}^{x}\int_{-\infty}^{z-T}e^{-c(z-y)}dP_-(s,y)
e^{-\rho _+(s)(x-z)}dz=\\
\begin{aligned}
 =&q_+(s)\rho _+(s)\int_{-\infty}^{x-T}e^{-\rho
_+(s)x+cy}dP_-(s,y)\int_{\max(0,y+T)}^{x}e^{-cq_+(s)z}dz\\
=&p_+(s)\biggl [  \int_{-\infty}^{-T}e^{cy-\rho _+(s)x}dP_-(s,y)+
\end{aligned}
\\
+\int_{-T}^{x-T}e^{\rho _+(s)(y+T-x)-cT}dP_-(s,y) -\int_{-\infty}^{x-T}
e^{-c(x-y)}dP_-(s,y)\biggr
],
\end{multline*}
we have
\begin{multline*}
s\overline{Q}\phantom{|}^T(s,x)=sP_+(s,x)+p_+(s)\overline{C}_T(s)
e^{-\rho _+(s)x}\times\\
\times\biggl[ \int_{-\infty}^{-T}e^{cy}dP_-(s,y)+\int_{-T}^{x-T}
e^{-cT+\rho
_+(s)(y+T)}dP_-(s,y)\biggr].
\end{multline*}
From the last equation we can find $\overline{C}_T(s)$, and
$\overline{Q}\phantom{|}^*_s(T)$, and then get~\rm{(9)}.
\end{proof}
Let's note, that $Q^T(s,x)\rightarrow \overline{P}_+(s,x)$, as
$T\rightarrow \infty$ and $Q^T(s,x)\rightarrow 0$, as $c\rightarrow
\infty$. If we consider, instead of $\xi(t)$, the process $\xi
_c(t)=C_c(t)-S(t)$, then relation (9) yields
\begin{multline*}
Q^T_c(s,x)=q_+^c(s)\mathrm{E}\,\left [  e^{\rho ^c_+(s) (\xi
_c^-(\theta_s)+T-x)},
\xi _c^-(\theta_s)+T-x>0\right ]\times\\
\times\left(\mathrm{E}\,\left [  e^{c(\xi _c^-(\theta_s)+T)}, \xi
_c^-(\theta_s)+T<0\right ]
 + \mathrm{E}\,\left [  e^{\rho ^c_+(s)(\xi _c^-(\theta_s)+T)},
 \xi _c^-(\theta_s)+T>0\right ]
  \right)^{-1}.
\end{multline*}
Taking into account that for $x>0$: $\mathrm{P}\left\{\xi
^+_c(\theta_s)>x\right\}= q_+^c(s)e^{-\rho _+^c(s)x}
\underset{c\to\infty}{\longrightarrow}e^{-\rho^+ _0(s)x}$, where
$\rho^+ _0(s)$ is the positive solution of the equation
$$\psi ^0(-ir):=ar-\lambda_2
\left(\int_{-\infty}^0 e^{rx}dF(x)-1\right)=0,$$ we get
$Q^T_c(s,x)\rightarrow Q_\infty^T(s,x)$ as $c\rightarrow \infty$. If
we denote
$$\xi^0_{\pm}(t)=\sup_{0\leq u\leq t}(\inf)\xi^0(u),$$
then
\begin{equation*}
\begin{split} Q_\infty^T(s,x)&=\mathrm{E}\left [  e^{\rho^+ _0(s) (\xi
_-^0(\theta_s)+T-x)},
                                      \xi _-^0(\theta_s)
                                      +T-x>0\right ]
\left(\mathrm{E}\,\left [  e^{\rho^+ _0(s) (\xi _-^0(\theta_s)+T)},
                                     \xi _-^0(\theta_s)+T>0\right ]
  \right)^{-1}\\
&=\int^{T-x}_{0}e^{\rho^+ _0(s)(T-x-y)}
        d\mathrm{P}\left\{-\xi _-^0(\theta_s)<y\right\}\left(\int^{T}_{0}e^{\rho^+ _0(s)(T-y)}
        d\mathrm{P}\left\{-\xi _-^0(\theta_s)<y\right\} \right)^{-1}\\
&=R_s(T-x)R^{-1}_s(T),
\end{split}
\end{equation*}
where the last relation is the well-known formula(see~\cite{3}) for
the upper semi-continuous processes.

\begin{proof} Consider the proof of the second theorem. The first relation in~\rm{(10)} follows
from equations~\rm{(13)} and~\rm{(14)}. The second relation follows
from the first one. The first equality in~\rm{(11)} was proved
in~\cite{9}. After inverting~(11), we get
\begin{equation}
\begin{split}
h_s(T,x,z)=&p_+(s)\frac{\partial}{\partial
z}P_-(s,z)I\left\{z<0\right\}+
  q_+(s)\rho _+(s)\int_{x-T}^{\min\{z,0\}}e^{-\rho _+(s)(z-y)}dP_-(s,y)-\\
&-Q^T(s,x)\biggl[ p_+(s)\frac{\partial}{\partial z}
 \mathrm{P}\left\{\xi ^-(\theta_s)+\theta '_c+x\leq z\right\} +\\
& +q_+(s)\rho _+(s)\int_{x-T}^{z}e^{-\rho _+(s)(z-y)}
  dP\left\{\xi ^-(\theta_s)+\theta '_c+x<z\right\}\biggr].
\end{split}
\end{equation}
 Using the integral transform of~(21) with respect to the
distribution of $\theta '_c$ we get formula~\rm{(12)}.
\end{proof}
\begin{slid} For the joint distribution $\left\{\tau
^-(x,T),\xi (\tau ^-(x,T))\right\}$ we have
\begin{equation}
  s\mathrm{E}\,\left [  e^{-s\tau ^-(x,T)},\xi (\tau ^-(x,T))< z,\;A_-(x)\right ]=
    \int_{x-T}^{x}\Pi_-(z-y)dH_s(T,x,y),\; z\leq x-T,
\end{equation}
where $H_s(T,x,y)$ is determined by its density~{\rm (12)} and $ \Pi
_-(x)=\int_{-\infty}^{x}\Pi (dy),\;x<0.$

The probability of the lack of exit (non-exit) from the interval
$\left ( x-T,x\right )$ has the form
\begin{multline} \mathrm{P}\left\{\tau
(x,T)>\theta_s\right\}=\mathrm{P}\left\{\xi
^-(\theta_s)>x-T \right\}-\\
 -Q^T(s,x)\Biggl[  \int_{-\infty}^{-T}e^{c(z+T)}dP_-(s,z)+
 \mathrm{P}\left\{\xi^-(\theta_s)>-T \right\}\Biggr ].
\end{multline}
%
The m.g.f. for $\tau (x,T)$ and $\tau ^-(x,T)$ are determined in the following way
\begin{equation}
  \begin{cases}
    Q(T,s,x)=1-\mathrm{P}\left\{\tau (x,T)>\theta_s\right\},\; 0<x<T,\\
    Q_T(s,x)=Q(T,s,x)-Q^T(s,x),\; 0<x<T.
  \end{cases}
\end{equation}
\end{slid}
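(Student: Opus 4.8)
The plan is to treat the three assertions in increasing order of difficulty. The two relations in~(24) are essentially bookkeeping. Since $\theta_s$ is exponential and independent of the path, $Q(T,s,x)=\mathrm{E}\,e^{-s\tau(x,T)}=\mathrm{P}\{\theta_s>\tau(x,T)\}=1-\mathrm{P}\{\tau(x,T)>\theta_s\}$, which is the first line; the second line is the partition $\Omega=A_+(x)\sqcup A_-(x)$ together with $\tau(x,T)=\tau^{\pm}(x,T)$ on $A_{\pm}(x)$, so that $Q(T,s,x)=Q^T(s,x)+Q_T(s,x)$. Thus only~(22) and~(23) carry genuine content, and both exploit the special jump structure of $\xi(t)$.

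For~(22) I would use that $\xi(t)$ is compound Poisson, so the lower boundary $x-T$ can be crossed only by a downward jump. Applying the compensation formula (Lévy system) to the functional recording the first jump that carries the path from the interior below a level $z\le x-T$, one gets
$$\mathrm{E}\,[e^{-s\tau^-(x,T)},\xi(\tau^-(x,T))<z,A_-(x)]=\int_0^\infty e^{-st}\!\int_{x-T}^{x}\Pi_-(z-y)\,\mathrm{P}\{\xi(t)\in dy,\tau(x,T)>t\}\,dt,$$
because from a pre-jump position $y\in(x-T,x)$ the rate of a jump landing below $z$ is exactly $\Pi((-\infty,z-y))=\Pi_-(z-y)$. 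It then remains to identify the discounted occupation measure: since $H_s(T,x,y)=\int_0^\infty se^{-st}\mathrm{P}\{\xi(t)<y,\tau(x,T)>t\}\,dt$, we have $\int_0^\infty e^{-st}\mathrm{P}\{\xi(t)\in dy,\tau(x,T)>t\}\,dt=s^{-1}\,dH_s(T,x,y)$, and multiplying through by $s$ yields~(22), with $H_s$ supplied by its density~(12).

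The relation~(23) I would obtain from the first-passage decomposition
$$\mathrm{P}\{\xi^-(\theta_s)>x-T\}=\mathrm{P}\{\tau(x,T)>\theta_s\}+\mathrm{P}\{\xi^+(\theta_s)\ge x,\ \xi^-(\theta_s)>x-T\},$$
so that everything reduces to the second term. On the event $\{\xi^+(\theta_s)\ge x,\xi^-(\theta_s)>x-T\}$ the path first reaches $x$ at the upward exit time $\tau^+(x,T)<\theta_s$ (so $A_+(x)$ holds), and by the strong Markov property together with the lack of memory of $\theta_s$ the post-$\tau^+$ piece must keep a fresh process above the level $x-T-\xi(\tau^+(x,T))$. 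The decisive simplification is that, by~(10), the overshoot $\gamma^+_T(x)=\xi(\tau^+(x,T))-x$ is $\mathrm{Exp}(c)$-distributed and independent of $\tau^+(x,T)$ on $A_+(x)$ (this is the content of $V^+=\tfrac{c}{c-i\alpha}Q^T$). Hence the second term factorizes as
$$Q^T(s,x)\int_0^\infty ce^{-c\gamma}\,\mathrm{P}\{\xi^-(\theta_s)>-(\gamma+T)\}\,d\gamma,$$
and an integration by parts in $\gamma$ turns this integral into $\mathrm{P}\{\xi^-(\theta_s)>-T\}+\int_{-\infty}^{-T}e^{c(z+T)}dP_-(s,z)$, which is exactly the bracket in~(23).

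The main obstacle I anticipate is the bookkeeping in~(23): justifying the factorization of the post-$\tau^+$ contribution (the joint use of the strong Markov property at $\tau^+(x,T)$, the independence of the exponential overshoot from $\tau^+(x,T)$ furnished by~(10), and the memorylessness of $\theta_s$) and then matching the resulting exponential integral, after integration by parts, against the two terms in the bracket. By contrast,~(22) is conceptually clean once one adopts the compensation-formula viewpoint, and~(24) is immediate from the definitions.
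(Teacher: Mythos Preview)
Your argument is correct. For~(22) you spell out the compensation-formula derivation that the paper simply imports from \cite[Theorem~7.3]{6}, and~(24) is handled identically in both. The real difference is in~(23). The paper obtains it by writing $\mathrm{P}\{\tau(x,T)>\theta_s\}=\int_{x-T}^{x}dH_s(T,x,z)$, plugging in the explicit density~(12), and then invoking the closed form~(9) for $Q^T(s,x)$ to collapse the resulting expression to~(23). Your route is structurally different: you start from the decomposition $\mathrm{P}\{\xi^-(\theta_s)>x-T\}=\mathrm{P}\{\tau(x,T)>\theta_s\}+\mathrm{P}\{\xi^+(\theta_s)\ge x,\ \xi^-(\theta_s)>x-T\}$, apply the strong Markov property at $\tau^+(x,T)$ together with the memorylessness of $\theta_s$, and then use only the qualitative consequence of~(10) that $\gamma^+_T(x)$ is $\mathrm{Exp}(c)$ and conditionally independent of $\tau^+(x,T)$ on $A_+(x)$. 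This bypasses both the density~(12) and the explicit formula~(9); the bracket in~(23) drops out of a single integration by parts. What your approach buys is a cleaner, more probabilistic proof that isolates exactly which feature of the model (the exponential upper overshoot) is responsible for the simple form of~(23); what the paper's approach buys is that it stays entirely within the analytic machinery already set up and exercises formula~(12), which is needed anyway for~(22) and for the subsequent corollaries.
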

\begin{proof}Formula~\rm{(22)} follows from~\cite[Theorem 7.3]{6}. By
substitution~\rm{(12)} in~\rm{(22)}, we obtain the relation in terms
of $Q^T(s,x)$ and the truncated distribution of
$\xi^-(\theta_s)+\theta'_c$. Taking into account that
\begin{multline*}
\begin{aligned} \mathrm{P}\left\{\tau (x,T)>\theta_s\right\}&=\int_{x-T}^{x}dH_s(T,x,z)=\\
&=\mathrm{P}\left\{\xi ^-(\theta_s)>x-T \right\}
-q_+(s)\int_{x-T}^{0}e^{\rho _+(s)(y-(x-T))}dP_-(s,z)+
\end{aligned}\\
+Q^T(s,x)\Biggl[  \int_{-T}^{0}e^{\rho _+(s)(z+T)}dP_-(s,z)
-\mathrm{P}\left\{\xi ^-(\theta_s)>-T\right\}\Biggr ],
\end{multline*}
 and using formula~\rm{(9)}, we
obtain~\rm{(23)} after some simple transformations .
Substituting~\rm{(23)} into the first relation of~\rm{(24)} we find
the m.g.f. of $\tau (x,T)$, and then we can get the m.g.f. of $\tau
^-(x,T)$ (see the second relation in~\rm{(24)}).
\end{proof}
\par On the basis of formulas~\rm{(6)}~-~\rm{(8)} we can get the next statement about the
limit behavior of $Q^T(s,x)$ and $h_s(T,x,z)$, as $s\rightarrow 0$.
\begin{slid} Function $h'_0(T,x,z)=\lim_{s\rightarrow
0}s^{-1}h_s(T,x,z)$ $(x-T<z<x,\;z\neq 0,\; 0<x<T)$ according to the
sign of $m$ have the next forms:
\par \noindent if $m>0$
\begin{equation} h'_0(T,x,z)=\frac{1}{m}\left(
c^{-1}\frac{\partial}{\partial z}\mathrm{P}\left\{\xi
^-<z\right\}-\mathrm{P}\left\{\xi ^-
>z\right\}\right)
I\left\{z<0\right\}+\\
+\frac{1}{m}Q^T(x)\mathrm{P}\left\{\xi ^->z-x\right\};
\end{equation}
%
if $m<0$
\begin{multline}
h'_0(T,x,z)=\left(-p_+\frac{\partial}{\partial z}\mathrm{E}\,\tau
^-(z) +q_+\rho _+\int_{z}^{0}e^{\rho _+(y-z)}d\mathrm{E}\,\tau ^-(y)
\right)I\left\{z<0\right\}-\\
-Q^T(x)\rho _+\int_{z-x}^{0}e^{\rho _+(y-(z-x))}d\mathrm{E}\,\tau
^-(y);
\end{multline}
%
if $m=0$
\begin{multline}
h'_0(T,x,z)=\left(-\frac{\partial}{\partial z}\mathrm{E}\,\tau
_0(z)-c\lambda ^{-1}+c\int_{z}^{0}\frac{\partial}{\partial
y}\mathrm{E}\,\tau _0(y)dy \right)I\left\{z<0\right\}+\\
+cQ^T(x)\left(\lambda ^{-1}-\int_{z-x}^{0}\frac{\partial}{\partial
y}\mathrm{E}\,\tau _0(y)dy \right).
\end{multline}
%
The ruin probability
$$Q^T(x)=\lim_{s\rightarrow 0}Q^T(s,x)$$
 $($according to the sign of $m$$)$ is determined from~{\rm{(9)}} in the following way
\begin{equation}  Q^T(x)=
  \begin{cases}
   \displaystyle \int_{x-T}^{0}d\mathrm{P}\left\{\xi ^-<y\right\}\times\\
   \displaystyle\quad\quad  \times\biggl[  \int_{-\infty}^{-T}e^{c(T+y)}
                                       d\mathrm{P}\left\{\xi ^-<y\right\}+
   \int_{-T}^{0}d\mathrm{P}\left\{\xi ^-<y\right\}\biggr]^{-1},\; m>0,\\
   \phantom{.}\\
 \displaystyle q_+e^{-\rho _+x}\left(\frac{1}{\lambda p_+}-\int_{x-T}^{0}e^{\rho _+y}
 \frac{\partial}{\partial y}\mathrm{E}\,\tau ^-(y)dy\right)\times\\
 \displaystyle  \times\biggl[ \frac{1}{\lambda p_+}- e^{-\rho _+T}
 \int_{-\infty}^{-T}e^{c(T+y)}\frac{\partial}{\partial y}\mathrm{E}\,\tau ^-(y)dy-\\
  \displaystyle\quad \quad \quad \quad \quad \quad \quad -
  \displaystyle\int_{-T}^{0}e^{\rho _+y}\frac{\partial}{\partial y}
  \mathrm{E}\,\tau ^-(y)dy\biggr]^{-1},\; m<0,\\
  \phantom{.}\\
  \displaystyle \left(\lambda ^{-1}-\int_{x-T}^{0}\frac{\partial}{\partial
y}\mathrm{E}\,\tau _0(y)dy \right)\times\\
  \displaystyle\quad \times\biggl [\lambda ^{-1}-
  \int_{-\infty}^{-T}e^{c(T+y)}\frac{\partial}{\partial
y}\mathrm{E}\,\tau _0(y)dy - \int_{-T}^{0}\frac{\partial}{\partial
y}\mathrm{E}\,\tau _0(y)dy\biggr]^{-1},\; m=0.
  \end{cases}
\end{equation}
The distribution of $\xi (\tau ^-(x,T))$  has the next form:
\begin{equation}
  \begin{aligned}
  \mathrm{P}\left\{\xi (\tau ^-(x,T))< z,\;A_-(x)\right\}=&\frac{1}{\lambda }\Pi _-(z)
  +\int_{x-T}^{0-}\Pi_-(z-y)h'_0(T,x,y)dy+\\&+\int_{0+}^{x}\Pi_-(z-y)h'_0(T,x,y)dy,\;z<x-T.
\end{aligned}
\end{equation}
\end{slid}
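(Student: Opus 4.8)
The plan is to derive all three assertions by letting $s\to0$ in formulas already at hand: the ruin probability from the explicit $Q^T(s,x)$ of~(9), the limiting density $h'_0$ from the density~(12), and the law of $\xi(\tau^-(x,T))$ from the identity~(22). The only input is Lemma~1, namely the asymptotics of $\rho_+(s)$, $q_+(s)$ and of $P_-(s,\cdot)$ recorded in~(6)--(8) according to the sign of $m$. Throughout, the subtle object is the atom of $\xi^-(\theta_s)$ at the origin, $p_-(s)=\mathrm P\{\xi^-(\theta_s)=0\}$; its size is pinned down by the relation $p_+(s)p_-(s)=s/(s+\lambda)$ from Theorem~2 together with $p_+(s)=\rho_+(s)/c$. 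Thus $p_-(s)\to cm/\lambda=\mathrm P\{\xi^-=0\}$ when $m>0$, while $s^{-1}p_-(s)\to(\lambda p_+)^{-1}$ when $m<0$ and $s^{-1/2}p_-(s)\to k_0/\lambda$ when $m=0$.

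For the ruin probability $Q^T(x)=\lim_{s\to0}Q^T(s,x)$ I would write~(9) as a quotient and factor out of numerator and denominator the power of $s$ dictated by~(6)--(8): $s^0$ for $m>0$, $s^{-1}$ for $m<0$, $s^{-1/2}$ for $m=0$. The scalar prefactors $q_+(s)$ and $e^{\pm\rho_+(s)x}$ converge by~(4)--(8); each Stieltjes integral $\int e^{\rho_+(s)y}\,dP_-(s,y)$ is split into its density part and its mass at $0$, the density giving (through $s^{-1}P'_-\to-\partial_y\mathrm E\,\tau^-$, resp. $s^{-1/2}P'_-\to f_0=-k_0\partial_y\mathrm E\,\tau_0$) the integrals against $\mathrm E\,\tau^-$ or $\mathrm E\,\tau_0$, and the atom contributing the terms $(\lambda p_+)^{-1}$, resp. $k_0/\lambda$. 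Since the common prefactor cancels in the quotient, this yields the three branches of~(29); for $m>0$ no rescaling is needed and direct substitution $P_-(s,\cdot)\to\mathrm P\{\xi^-<\cdot\}$, $q_+(s)\to1$, $\rho_+(s)\to0$ gives the first branch.

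For the density I would set $h'_0=\lim_{s\to0}s^{-1}h_s(T,x,z)$ and pass to the limit term by term in~(12), each time pairing the removed factor $s^{-1}$ with a vanishing quantity: with $p_+(s)$ and $\rho_+(s)$ when $m>0$ (so that $s^{-1}p_+(s)\to(cm)^{-1}$ and $s^{-1}\rho_+(s)\to m^{-1}$), with $P'_-(s,\cdot)$ when $m<0$ (where $q_+(s)$ and $\rho_+(s)$ stay bounded away from $0$), and split as $s^{-1/2}\cdot s^{-1/2}$ between $\rho_+(s)$ and the $P'_-$-integrals when $m=0$. The limits~(6)--(8) turn the first summand into $c^{-1}m^{-1}\partial_z\mathrm P\{\xi^-<z\}$, $-p_+\partial_z\mathrm E\,\tau^-(z)$ or $-\partial_z\mathrm E\,\tau_0(z)$; the two integral summands, after absorbing the atom at $0$ (which produces $\mathrm P\{\xi^-=0\}=cm/\lambda$ when $m>0$ and the $c\lambda^{-1}$ contributions when $m=0$) and one integration by parts, become the $\mathrm E\,\tau^-$/$\mathrm E\,\tau_0$ expressions of~(26)--(28); the factor $Q^T(x)$ enters through $Q^T(s,x)\to Q^T(x)$ from the previous step.

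For the law of $\xi(\tau^-(x,T))$ I would divide~(22) by $s$ and let $s\to0$. On the right the measure $dH_s$ is the density $h_s$ of~(12) together with the atom $\mathrm P\{\xi(\theta_s)=0,\tau(x,T)>\theta_s\}=s/(s+\lambda)$ at $y=0$; dividing by $s$ sends the density to $s^{-1}h_s\to h'_0$ and the atom to $\Pi_-(z)/(s+\lambda)\to\lambda^{-1}\Pi_-(z)$, while the left side tends to $\mathrm P\{\xi(\tau^-(x,T))<z,A_-(x)\}$. Splitting the integral at the origin to keep the density apart from the atom gives~(30). The main obstacle is exactly this atom bookkeeping: one must match the vanishing rate of $p_-(s)$ to the power of $s$ removed, and combine the atom with the density integrals so that, using $\mathrm E\,\tau^-(0^-)=(\lambda p_+)^{-1}$ and an integration by parts, the expressions collapse to the compact $\mathrm E\,\tau^-$/$\mathrm E\,\tau_0$ form; making these cancellations come out correctly in each of the three regimes is the delicate point.
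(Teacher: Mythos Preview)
Your proposal is correct and follows exactly the route the paper indicates: the paper gives no detailed proof of this corollary, merely stating that it is obtained ``on the basis of formulas~(6)--(8)'' by passing to the limit $s\to0$ in~(9), (12), and~(22), which is precisely your plan. Your identification of the atom bookkeeping via $p_+(s)p_-(s)=s/(s+\lambda)$ and the resulting limits $p_-(s)\to cm/\lambda$, $s^{-1}p_-(s)\to(\lambda p_+)^{-1}$, $s^{-1/2}p_-(s)\to k_0/\lambda$ is the key mechanism that produces the constants $1/(\lambda p_+)$ and $\lambda^{-1}$ in~(28), and your handling of~(22) (isolating the atom at $y=0$ to get the $\lambda^{-1}\Pi_-(z)$ term in~(29)) matches the structure of the stated result.
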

\begin{slid} For the process $\xi (t)$ with the cumulant
function
\begin{equation}
   \psi (\alpha )=\lambda p(c(c-i\alpha )^{-1}-1)+\lambda q(b(b+i\alpha )^{-1}-1),
\end{equation}
 $Q^T(x)$ is represented in the following way $(0<x<T)$
\begin{equation}
  Q^T(x)=\begin{cases}
          \displaystyle\left(1-q_-e^{\rho _-(x-T)} \right)
          \left(1-q_-c\left(c+\rho _- \right)^{-1}e^{-\rho _-T}
          \right)^{-1},\;m>0,\\
          \phantom{.}\\
          \displaystyle q_+e^{-\rho _+x}\left(1-b(\rho _+ +b )^{-1}
          e^{\rho _+(x-T)} \right)\left(1-b(\rho _+ +b )^{-1}q_+e^{-\rho _+T}
          \right)^{-1},\;m<0,\\
          \phantom{.}\\
          \displaystyle \frac{c(1+b (T-x))}{b +c + bc T},\;m=0.
         \end{cases}
\end{equation}
If $\xi (t)$ is a symmetric process $(p=q=1/2,\,b=c)$, then
$$Q^T(x)=\frac{1+c(T-x)}{2+cT},\;Q_T(x)=\frac{1+cx}{2+cT},\quad (0<x<T).$$
\end{slid}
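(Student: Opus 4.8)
The plan is to derive (32) by specializing the general limiting formula (29) to the process with cumulant (31), for which \emph{both} the upward and downward jumps are exponential: $\varphi(\alpha)=b(b+i\alpha)^{-1}$ and $F'(x)=be^{bx}$ for $x<0$. The whole computation then becomes rational. In particular the mean reduces to $m=\lambda(p/c-q/b)=\lambda(pb-qc)/(bc)$, so the sign of $m$ coincides with that of $pb-qc$, and this determines which branch of (29)--(32) we are in. The task is to compute every ingredient of (29) in closed form — the roots $\rho_\pm(s)$, the law of $\xi^-(\theta_s)$, and the limiting objects $\mathrm{P}\{\xi^-<y\}$, $\mathrm{E}\,\tau^-(y)$, $\mathrm{E}\,\tau_0(y)$ — and substitute.

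First I would solve Lundberg's equation $\psi(-ir)=s$. Clearing the denominators $(c-r)$ and $(b+r)$ collapses it to the quadratic
$$(\lambda+s)r^2+[\lambda(pb-qc)-s(c-b)]r-scb=0,$$
whose product of roots $-scb/(\lambda+s)<0$ forces one positive root $\rho_+(s)$ and one negative root $-\rho_-(s)$. Lemma~1 already supplies the upper factor (4); since the downward jumps are exponential the same argument on the lower half-line gives $\varphi_-(s,\alpha)=p_-(s)(b+i\alpha)(\rho_-(s)+i\alpha)^{-1}$ with $\rho_-(s)=bp_-(s)$, the consistency constant $p_+(s)p_-(s)=s/(s+\lambda)$ being exactly the atom found in Theorem~2. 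Inverting $\varphi_-$ shows $\xi^-(\theta_s)$ has an atom $p_-(s)$ at $0$ and density $q_-(s)\rho_-(s)e^{\rho_-(s)y}$ on $(-\infty,0)$, i.e. $P_-(s,y)=q_-(s)e^{\rho_-(s)y}$ for $y<0$.

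Next I would pass to $s\to0$ in the three regimes using (6)--(8). For $m>0$ it is $\rho_+(s)$ that vanishes while $\rho_-(s)\to\rho_-=pb-qc$, whence $q_-=q(b+c)/b$ and $\mathrm{P}\{\xi^-<y\}=q_-e^{\rho_-y}$. For $m<0$ the roles swap: $\rho_+(s)\to\rho_+=qc-pb$ (so $p_+=\rho_+/c$, $q_+=1-\rho_+/c$) while the vanishing root satisfies $\rho_-(s)\sim scb/(\lambda\rho_+)$; inserting this into (7) gives $\mathrm{E}\,\tau^-(y)=c(1-by)/(\lambda\rho_+)$, whose $y$-derivative is the \emph{constant} $-bc/(\lambda\rho_+)$. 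For $m=0$ one has $q=b/(b+c)$, so $\Pi_0(dx)=\lambda q(b+c)e^{bx}dx$ describes a downward compound-Poisson process of rate $\lambda_0=\lambda q(b+c)/b=\lambda$ with $\mathrm{Exp}(b)$ jumps, and a renewal count of jumps to cross a level yields $\mathrm{E}\,\tau_0(y)=(1-by)/\lambda$, again with constant derivative $-b/\lambda$. The crucial simplification is that in both $m<0$ and $m=0$ these derivatives are constant, so every integral in (29) reduces to an elementary exponential.

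Finally I would substitute each into the matching branch of (29). Using $\int_{-\infty}^{-T}e^{c(T+y)}\,dy=1/c$ together with $\rho_+=cp_+$ and $\rho_-=bp_-$, each numerator collapses to $1-q_-e^{\rho_-(x-T)}$ (and its $m<0$, $m=0$ analogues) and each denominator factors as $1-q_-c(c+\rho_-)^{-1}e^{-\rho_-T}$, etc., so the three ratios are exactly the three lines of (32). The symmetric case $p=q=1/2$, $b=c$ has $m=0$, so plugging $b=c$ into the $m=0$ line gives $Q^T(x)=(1+c(T-x))/(2+cT)$; since $m=0$ makes exit from the bounded interval certain, $\mathrm{P}\{\tau(x,T)>\theta_s\}\to0$, i.e. $Q(T,x)=1$, and the second relation of (24) gives $Q_T(x)=1-Q^T(x)=(1+cx)/(2+cT)$. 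I expect the main obstacle to be the $m\neq0$ asymptotics: one must extract the correct first-order behaviour of the vanishing root of the quadratic and convert it to $\mathrm{E}\,\tau^-$ via (7) without a sign slip, because that single constant controls both the numerator and the denominator of the $m<0$ formula.
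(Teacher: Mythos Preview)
Your proposal is correct and follows essentially the same route as the paper's own proof: observe that with cumulant~(30) the process is also almost \emph{lower} semi-continuous, so $\varphi_-(s,\alpha)=p_-(s)(b+i\alpha)(\rho_-(s)+i\alpha)^{-1}$ with $\rho_-(s)=bp_-(s)$ and $P_-(s,y)=q_-(s)e^{\rho_-(s)y}$; then compute, according to the sign of $m$, the limits $\mathrm{P}\{\xi^-<y\}$, $\mathrm{E}\,\tau^-(y)=(1-by)/(\lambda p_+)$, $\mathrm{E}\,\tau_0(y)=(1-by)/\lambda_0$ (your observation $\lambda_0=\lambda$ when $m=0$ is correct), and substitute into the three branches of~(28). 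The paper obtains $\mathrm{E}\,\tau^-$ and $\mathrm{E}\,\tau_0$ by differentiating the explicit $T^-(s,x)$ and $T_0^-(s,x)$ at $s=0$ rather than via the product-of-roots asymptotics of the Lundberg quadratic, but the two computations are equivalent and yield the same constants; your final substitutions check out line by line.
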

\begin{proof} Let's note that the process with cumulant~\rm{(30)} is
the almost upper and lower semi-continuous process. Then in addition
to relations~\rm{(4)}~-~\rm{(5)} we have that
\begin{equation}
   \varphi_-(s,\alpha
)=\dfrac{p_-(s)(b+i\alpha)}{\rho_-(s)+i\alpha},
\end{equation}
where $-\rho_-(s)=-bp_-(s)$ is the negative root of the equation $\psi (-i r)=s$, $s>0$,
\begin{equation}
 \mathrm{P}\left\{\xi ^-(\theta_s)<x\right\}= T^-(s,x)=q_-(s)e^{\rho_-(s)x}, x<0.
\end{equation}
 If $m>0$, then
\begin{equation}
    \mathrm{P}\left\{\xi ^-(\theta_s)<x\right\}\underset{s\to0}{\longrightarrow}
    \mathrm{P}\left\{\xi  ^-<x\right\}=q_-e^{bp_-x},\;x<0,\quad
    p_-(s)\underset{s\to0}{\longrightarrow}p_->0.
\end{equation}
Taking into account that $p_+(s)p_-(s)=s\left(s+\lambda
\right)^{-1}$, we have, for
 $m<0,$
$q'_-(s)=-p'_-(s)\rightarrow-(\lambda p_+) ^{-1}$ as $s\rightarrow
0$. Hence,
\begin{equation}
  \mathrm{E}\,\tau ^-(x)=-\frac{\partial}{\partial s}T^-(s,x)|_{s=0}=\frac{1-bx}{\lambda
  p_+},\;x<0.
\end{equation}
If $m=0$, then for $\widetilde{\xi }_0(t)$, we have $\Pi
_0(dx)=\lambda _0 b e^{b x}dx,\, x<0,\; \lambda _0=\lambda
q(c+b)b^{-1}$, moreover
$$
\widetilde{\xi }^-_0(t)=\widetilde{\xi }_0(t),\;p^0_-(s)=
\mathrm{P}\left\{\widetilde{\xi }_0(\theta_s)=0\right\}
=\frac{s}{s+\lambda _0}.
$$
Hence, the m.g.f. of $\tau _0(x)$ has the form
$$T^-_0(s,x)=\mathrm{E}\,e^{-s\tau _0(x)}=q_-^0(s)e^{bp_-^0(s)x},\;x<0.$$
Since $(p_-^0)'(s)=-(q_-^0)'(s)\rightarrow \lambda _0^{-1}$ as
$s\rightarrow 0$, we get
\begin{equation}
\mathrm{E}\,\tau _0(x)=-\frac{\partial}{\partial
s}\left.T^-_0(s,x)\right |_{s=0}=\frac{1-bx}{\lambda _0},\;x<0.
\end{equation}
 Substituting formulas~\rm{(34)}~-~\rm{(36)}
into the corresponding relations of~\rm{(28)} we get~\rm{(31)}.
\end{proof}

\smallskip\noindent{\it{Remark}}. We should note that it is easy to
get the representation of the m.g.f. of the functionals related to
the exit from the interval for the almost lower semi-continuous
process $\eta (t)$ (with the parameter $b>0$, by considering that
$\xi(t)=-\eta(t)$). Particularly,
\begin{multline}
Q_T(s,x)=q_-(s)\int_{0}^{x}e^{\rho
_-(s)(x-y)}dP_+(s,y)\times\\
\times\left [ \int_{T}^{\infty}e^{b
(T-y)}dP_+(s,y)+\int_{0}^{T}e^{\rho _-(s)(T-y)}dP_+(s,y)\right
]^{-1}.
\end{multline}
\smallskip
\par Let $\xi (t)$ be the almost upper semi-continuous piecewise constant
process.
 Then $\xi _1(t)=at+\xi (t)$, $a<0$ is the almost upper
 semi-continuous piecewise linear process.
For the process $\xi _1(t)$ on the basis of the stochastic relations for $\tau ^+(x,T)$:
$$
  \tau ^+(x,T)\dot{=}
  \begin{cases}
    \zeta ,\; \xi +a\zeta>x, \\
    \zeta +\tau ^+(x-\xi -a\zeta ),\; x-T<\xi +a\zeta <x,
  \end{cases}
$$
we have the next integro-differential equation for $Q^T(s,x)$
\begin{multline}
  a\frac{\partial}{\partial x}Q^T(s,x)=\lambda
  \int_{x-T}^{x}Q^T(s,x-z)dF_1(z)-(s+\lambda )Q^T(s,x)+\lambda
  pe^{-cx},\; 0<x<T.
\end{multline}
Introducing the function
$\overline{Q}\phantom{|}^T(s,x)=1-Q^T(s,x)$, and following the
reasoning analogous to that for the piecewise constant process $\xi
(t)$ we can get the representation of the functionals related to the
exit from the interval $\left (  x-T,\,x\right )$ for the piecewise
linear processes.
\par The two boundary problems for the integer - valued random-walks
are considered in~\cite{10} and for the process with stationary
independent increments are treated in~\cite{11}.
\enddocument